\numberwithin{equation}{section}
\newtheorem{definition}{Definition}[section]
\newtheorem{lem}[definition]{Lemma}
\newtheorem{rem}[definition]{Remark}
\newtheorem{thm}[definition]{Theorem}
\newtheorem{cor}[definition]{Corollary}
\newtheorem{ex}[definition]{Example}
\begin{document}
\title{On complicial homotopy monoids}
\author{Ryo Horiuchi}
\date{}

\maketitle

\section{Introduction}
For a Kan complex with a vertex and $n\geq 1$, we have the notion of its $n$-th homotopy group. This notion has been playing a central role in geometry. In this paper, for a weak complicial set in the sense of \cite{V1} with a vertex and $n\geq 1$, we construct a monoid, which we call its $n$-th homotopy monoid, and show that it is a natural generalization of homotopy group.

Our main theorem is the following:

\begin{thm}For a weak complicial set $X$, its vertex $x\in X$ and $n\geq 1$, we derive a monoid $\tau_n(X, x)$, which is not a group in general.

For a Kan complex $K$ and its vertex $v\in K$, we let $\operatorname{th_0}(K)$ denote the corresponding weak complicial set. Then the monoid $\tau_n(\operatorname{th_0}(K), v)$ coincides with the simplicial homotopy group $\pi_n(K, v)$.
\end{thm}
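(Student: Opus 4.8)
The plan is to make the weak complicial set $\operatorname{th_0}(K)$ completely explicit and then read off both the underlying set and the monoid operation of $\tau_n(\operatorname{th_0}(K), v)$, comparing each against the classical description of $\pi_n(K,v)$. First I would record that the underlying simplicial set of $\operatorname{th_0}(K)$ is $K$ itself and that its stratification declares every simplex of dimension $\geq 1$ to be thin; because $K$ is a Kan complex, every horn (inner or outer) admits a filler and every positive-dimensional simplex is thin, so all of the complicial horn-filling and thinness-extension conditions hold automatically and $\operatorname{th_0}(K)$ is a genuine weak complicial set with $v$ as a vertex. This reduces the theorem to comparing two constructions carried out on the same underlying data.

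Next I would unwind the definition of $\tau_n(\operatorname{th_0}(K), v)$. Its elements are represented by $n$-simplices of $K$ pinned to $v$ along the boundary, i.e.\ those whose faces are the relevant iterated degeneracy of $v$, and this is precisely the data of a map $(\Delta^n, \partial\Delta^n)\to (K,v)$ representing a class in $\pi_n(K,v)$. The equivalence relation defining $\tau_n$ identifies two such simplices when they cobound a thin $(n+1)$-simplex with the prescribed degenerate faces; since in $\operatorname{th_0}(K)$ \emph{every} $(n+1)$-simplex is thin, this is exactly the condition that the two $n$-simplices be simplicially homotopic rel $\partial\Delta^n$. Hence the underlying sets of $\tau_n(\operatorname{th_0}(K),v)$ and $\pi_n(K,v)$ coincide under the evident correspondence.

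It then remains to match the algebraic structures, which is where I expect the real work to lie. The monoid product on $\tau_n$ is computed by filling a suitable thin horn whose two relevant faces carry the given classes and whose remaining faces are degenerate, and then extracting the composite as the omitted face of the filler. In $\operatorname{th_0}(K)$ the horn to be filled is an ordinary horn in the Kan complex, its filler exists by the Kan condition, and the extracted face is exactly the simplex produced by the standard recipe for the group operation on $\pi_n(K,v)$. The main obstacle will be bookkeeping: checking that the face indices and degeneracy patterns used in the complicial composition agree, up to the homotopies already quotiented out, with those of the simplicial $\pi_n$-product, and verifying that different admissible choices of horn yield the same class, so that the comparison is a well-defined homomorphism.

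Finally I would observe that although $\tau_n$ is only a monoid in general, the comparison forces $\tau_n(\operatorname{th_0}(K),v)$ to be a group: the inverse of a class is supplied by the inverse in $\pi_n(K,v)$, which on the complicial side reflects the fact that in $\operatorname{th_0}(K)$ every cell is invertible by virtue of the Kan condition. Assembling these steps gives the desired isomorphism $\tau_n(\operatorname{th_0}(K),v)\cong \pi_n(K,v)$ of monoids.
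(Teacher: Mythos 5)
There is a genuine gap: you have proved only the second assertion of the theorem. The first assertion---that for an \emph{arbitrary} weak complicial set $X$ one can derive a monoid $\tau_n(X,x)$ at all, and that it fails to be a group in general---is where the paper's actual work lies, and your proposal takes all of it for granted. Concretely, for general $X$ one must (a) show that $\sim_{\partial\Delta[n]}$ is an equivalence relation, which the paper does first for vertices using complicial horn fillers \emph{together with} the thinness extensions $\Delta^k[2]'\hookrightarrow\Delta^k[2]''$ (needed so that the composite and reversed edges are again thin, hence again legitimate homotopies) and then transfers to $n$-simplices via Verity's closedness theorem (\cite[Theorem 75]{V1}: $\operatorname{hom}(A,X)\to\operatorname{hom}(B,X)$ is a complicial fibration between weak complicial sets); (b) show that the product $[\alpha][\beta]=[d_n(\theta)]$ is independent of the representatives and of the chosen filler $\theta$, which the paper proves by a prism argument: the inclusion $(\Delta^n[n+1]\circledast\partial\Delta[1])\cup(\Lambda^n[n+1]\circledast\Delta[1])\hookrightarrow\Delta^n[n+1]\circledast\Delta[1]$ is left anodyne (\cite[Lemma 72]{V1}), combined with the thinness bookkeeping $\Delta^n[n+1]\circledast\Delta[1]=\Delta^n[n+1]\circledast\Delta[1]_t$ (there are no thin $1$-simplices in $\Delta^n[n+1]$ for $n\geq1$), which is exactly what promotes the filled prism to a $\Delta[1]_t$-homotopy; (c) prove associativity by filling $\Lambda^n[n+2]$ and invoking the simplicial identity $d_nd_n$; and (d) justify ``not a group in general'': inverses would require filling outer complicial horns such as $\Lambda^2[2]$, which a weak complicial set only fills when the relevant face is thin, so the construction of an inverse breaks for non-thin $\alpha$. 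None of (a)--(d) appears in your proposal, and (b) in particular is not routine---it is precisely the step where the stratification matters and where the naive Kan-complex argument does not transcribe.

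The part you do address, the comparison for $\operatorname{th_0}(K)$, is essentially correct and is in fact the paper's own (very brief) argument: since every positive-dimensional simplex of $\operatorname{th_0}(K)$ is thin, stratified maps out of complicial horns and prisms are just simplicial maps, and the paper's recipe ($d_{n-1}\theta=\alpha$, $d_{n+1}\theta=\beta$, other faces constant, product $[d_n\theta]$) is verbatim the classical definition of the product on $\pi_n(K,v)$, so the index ``bookkeeping'' you anticipate as the main obstacle is actually vacuous. One correction within this part: the paper's relation $\sim_{\partial\Delta[n]}$ is defined by a prism $\Delta[n]\circledast\Delta[1]_t\to X$ constant on $\partial\Delta[n]$, not by a single cobounding thin $(n+1)$-simplex with degenerate remaining faces as you state; the two descriptions agree for Kan complexes, but that equivalence is itself a (classical) consequence of the Kan condition, so you should either work with the prism definition throughout or explicitly cite the classical equivalence rather than conflating the two.
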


Stable homotopy theorists have been regarding the sphere spectrum $\mathbb{S}$ as an algebra deeper than the initial ring $\mathbb{Z}$ of integers. 
With this attitude, they have established the theory of higher algebras, which has remarkably been successful in these decades. 

For example, the most basic homology theory of higher algebras, topological Hochschild homology defined in \cite{B}, is known to be useful to compute other invariants such as algebraic $K$-theory and has contributed, in particular, to number theory and to arithmetic geometry (cf. \cite{BMS}, \cite{H}). 
One of the features of topological Hochschild homology is that it is related to Witt vectors. 
More precisely, for a commutative ring $R$, the topological Hochschild homology spectrum $\operatorname{THH}(R)$ and its cyclotomic structure give the ring $\mathbb{W}(R)$ of (big) Witt vectors and its structure morphisms (see \cite{HM} for this result, see \cite{NS1} and \cite{NS2} for cyclotomic structure in terms of $(\infty, 1)$-category theory). 
This is an important fact when using homology theories of higher algebras for other branches of mathematics. 
Another fact we emphasize here is that the theory of Witt vectors for commutative rings has naturally been generalized to that for commutative semirings in terms of universal algebras in \cite{Borger}. 
However, $\operatorname{THH}(R)$ for semirings $R$ do not make sense.

It is well known that the category of $\Gamma$-spaces can admit a model structure of connective spectra, where connective spectra are spectra whose negative stable homotopy groups vanish (see for example \cite{BF} or \cite{S} for more detail).
As is shown in \cite[\S 3]{CC}, the category of commutative monoids can be embedded in that of $\Gamma$-sets (i.e. discrete $\Gamma$-spaces) and this embedding induces the embedding of the category of semirings into that of monoid objects in $\Gamma$-sets. 
This fact implies that we can regard the initial semiring $\mathbb{N}$ as an algebra over $\mathbb{S}$, since $\mathbb{S}$ actually lives in the category of $\Gamma$-sets as the unit. 
This attitude plays a role in op. cit. However again, we are unable to analyze semirings in the homotopy theory of (connective) spectra, since everything is characterized by homotopy groups in it and semirings do not have their own underlying abelian groups in general.

Thus we may wonder if there can be a possibly new homotopy theory to study semirings as algebras over $\mathbb{S}$ or to extend the relation between topological Hochschild homology and Witt vectors for commutative rings to that for commutative semirings. 
Then we may have to ask if we can have the notion of Eilenberg-MacLane spectra of commutative monoids or semirings for such homotopy theory, and immediately after asking so we may again ask whether it is possible to establish a possibly new notion of space whose natural invariants take the values in monoids instead of groups in general or not.
 
In this paper, we do not answer such questions unfortunately, but using the theory of stratified simplicial sets we construct the notion of {\it homotopy monoid} which is a natural generalization of homotopy group. 
We hope this short article takes the first step to answer the naive questions above.

A stratified simplicial set is a pair of a simplicial set and a subset of its simplices with certain conditions. In \cite{V1} Verity constructed a model structure on the category of stratified simplicial sets, in which fibrant and cofibrant objects are precisely weak complicial sets, and showed that any Kan complex can be viewed as a weak complicial set. That is to say, the model structure is a generalization of that for $\infty$-groupoids. Therefore it should be natural to try to generalize the notion of simplicial homotopy groups of Kan complexes to weak complicial sets. 

The intuition used in this short article is that, as in a Kan complex every simplex is ``invertible", in a weak complicial set every {\it thin} simplex is ``invertible". We see that under this intuition we can simply apply to weak complicial sets the analogous construction of simplicial homotopy group.


\section{Preliminaries}
Assuming the reader is familiar with simplicial sets, we recall some notations about weak complicial sets from \cite{V1}.

\begin{definition}[\cite{V1}]A pair $(X, tX)$ is a stratified simplicial set\footnote{This may be called marked simplicial set or simplicial set with marking in some literature. However the word marked simplicial set seems to be used for a different notion as well. We follow the nomenclature of \cite{V1} to avoid confusion.} if
\begin{itemize}
  \item $X$ is a simplicial set,
  \item $tX$ is a set of simplices in $X$ such that $dX\subset tX$ and $X_0\cap tX=\emptyset$,
\end{itemize}
where $dX$ denotes the set of degenerate simplices in $X$.

Let $(X, tX)$ and $(Y, tY)$ be stratified simplicial sets. A stratified map $f:(X, tX)\to (Y, tY)$ is a simplicial map $f: X\to Y$ such that $f(x)\in tY$ for all $x\in tX$.
\end{definition}

We call $X$ the underlying simplicial set of $(X, tX)$ and elements in $tX$ its thin simplices. However, for the simplicity, we often write $X$ for a stratified simplicial set $(X, tX)$ omitting $tX$. Also we denote the category of stratified simplicial sets and stratified maps by ${\rm\underline{Strat}}$. 

\begin{ex}{\rm Every simplicial set $X$ defines stratified simplicial sets $(X, dX)$ and $(X, \bigcup_{n\geq 1}X_n)$. Each assignment gives rise to a functor ${\rm\underline{Simp}}\to{\rm\underline{Strat}}$, which is left (resp. right) adjoint to the forgetful functor, where ${\rm\underline{Simp}}$ denotes the category of simplicial sets and simplicial maps.}
\end{ex}

As the standard simplicial sets and their horns play a role, in particular they give the definition of quasi-category, in the theory of simplicial sets, we need the following specific stratified simplicial sets. For stratified simplicial sets $(X, tX)$ and $(Y, tY)$, we say that $(X, tX)$ is a regular stratified simplicial subset of $(Y, tY)$ if $X\subset Y$ as simplicial sets and $tX=X\cap tY$ (cf. \cite{V1}).


\begin{definition}[\cite{V1}]Let $n$ be a natural number and $k\in[n]$.
\begin{itemize}

  \item The standard thin $n$-simplex $\Delta[n]_t$ is the stratified simplicial set whose underlying simplicial set is the standard simplicial set $\Delta[n]$ and  
  \[
  t\Delta[n]_t = \begin{cases}
    d\Delta[n]\cup\{\operatorname{Id}_{[n]}\} & (n\neq 0) \\
    d\Delta[n] & (n=0)
  \end{cases}
\]
  
  \item The $k$-complicial $n$-simplex $\Delta^k[n]$ is the stratified simplicial set whose underlying simplicial set is the standard simplicial set $\Delta[n]$ and
  \[t\Delta^k[n]=d\Delta[n]\cup\{\alpha\in\Delta[n]| \{k-1, k, k+1\}\cap[n]\subset\operatorname{Im}(\alpha)\}\] 
  
  \item The $n-1$-dimensional $k$-complicial horn\footnote{If we write $\Lambda^k[n]_{simp}$ for the simplical horn, this stratified simplicial set $\Lambda^k[n]$ is different from both of $(\Lambda^k[n]_{simp}, d\Lambda^k[n]_{simp})$ and $(\Lambda^k[n]_{simp}, \bigcup_{n\geq1}\Lambda^k[n]_{simp}$) in general. The underlying simplicial set of $\Lambda^k[n]$ is $\Lambda^k[n]_{simp}$.} $\Lambda^k[n]$ is the regular stratified simplicial subset of $\Delta^k[n]$ generated by the set of faces $\{\delta_i|i\in[n]\setminus k\}$
  
  \item $\Delta^k[n]''$ (resp. $\Lambda^k[n]'$) is the stratified simplicial set whose underlying simplicial set is the same as that of $\Delta^k[n]$ (resp. $\Lambda^k[n]$) and its thin simplices are $t\Delta^k[n]$ (resp. $t\Lambda^k[n]$) with all its $n-1$-simplices
  
  \item $\Delta^k[n]':=\Delta^k[n]\cup\Lambda^k[n]'$.
  \end{itemize}
\end{definition}

These stratified simplicial sets define the notion of weak complicial set, which is the subject of this section.

\begin{definition}[\cite{V1}]A stratified simplicial set is called a weak complicial set\footnote{This is also called (non-saturated) complicial set in some literature. But again we follow the nomenclature in \cite{V1}.} if it has the right lifting property with respect to the following morphisms:
\begin{itemize}
  \item $\Lambda^k[n]\hookrightarrow\Delta^k[n]$ for $n\geq 1$ and $k\in[n]$,
  
  \item $\Delta^k[n]'\hookrightarrow\Delta^k[n]''$ for $n\geq 2$ and $k\in[n]$.
\end{itemize}
\end{definition}

In \cite{V1}, it is shown that every quasi-category can be vied as a weak complicial set. Moreover, in \cite{V2}, it is shown that every strict $\omega$-category can be viewed as a weak complicial set (via Street's $\omega$-nerve functor). Therefore weak complicial set is a common generalization of $(\infty, 1)$-category and strict $\omega$-category.

In particular weak complicial set is a generalization of $\infty$-groupoid that is homotopy theoretically equivalent to topological space, so we may take weak complicial sets as spaces in which (higher) cells are not necessarily invertible, while in any $\infty$-groupoid every $n$-cell ($n\geq1$) must be invertible.

In addition, Verity constructed in \cite{V1} a model structure on ${\rm\underline{Strat}}$, in which weak complicial sets are precisely the fibrant and cofibrant objects. Hence we already have a homotopy theory of weak complicial sets. Note that in op. cit. the weak equivalences of stratified simplicial sets is defined without using homotopy monoids we are constructing in the next section.

Before going to the next section, we recall the cartesian product of stratified simplicial sets.

\begin{definition}[\cite{V1}]Let $X$ and $Y$ be stratified simplicial sets. Then the cartesian product $X\circledast Y$ of them is a stratified simplicial set whose underlying simplicial set is $X\times Y$ and a simplex $(x, y)\in X\circledast Y$ is thin $:\overset{def}{\Leftrightarrow}$ $x\in tX$ and $y\in tY$.
\end{definition}


\section{Complicial construction of homotopy monoids}

We use the notations recalled in the previous section to construct homotopy monoids referring to famous textbooks such as \cite{GJ} and \cite{M}.

\begin{definition}[\cite{V1}]Let $f, g:A\to X$ be stratified maps of stratified simplicial sets. We write $f\sim g$ if there exits a map $H:\Delta[1]_t\to X$ such that 
   \[
   \xymatrix{
 A\circledast\Delta[0]\ar[d]_{1_A\times d^1}\ar[r]^{\ \ \cong}&A\ar[rd]^f&\\
 A\circledast\Delta[1]_t\ar[rr]^H&&X\\
 A\circledast\Delta[0]\ar[u]^{1_A\times d^0}\ar[r]_{\ \ \cong}&A \ar[ru]_g&     
}
\]
commutes.
\end{definition}

We may call this $H$ a (simple) homotopy from $f$ to $g$. Since our aim is to construct homotopy monoids, we may need the notion of relative homotopy as well.

\begin{definition}Let $f, g:A\to X$ be stratified maps of stratified simplicial sets and $B\to A$ an inclusion of stratified simplicial sets. Assume that $f|_B=g|_B$. We write $f\sim_B g$ if $f\sim g$ with $H: A\circledast\Delta[1]_t\to X$ and 
   \[
   \xymatrix{
    A\circledast\Delta[1]_t\ar[r]^{\ \ \ H}&X\\
 B\circledast\Delta[1]_t\ar@{^{(}->}[u] \ar[r]_{\ \ \ proj}  &B\ar[u]_{f|_B=g|_B}
}
\]
commutes.
\end{definition}

\begin{lem}The relation $\sim$ is an equivalence relation for vertices in a weak complicial set.
\end{lem}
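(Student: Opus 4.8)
The plan is to reduce homotopy of vertices to a statement about thin edges. Unwinding the definition with $A=\Delta[0]$, I first observe that $\Delta[0]\circledast\Delta[1]_t\cong\Delta[1]_t$ as stratified simplicial sets, since the unique nondegenerate edge is thin; hence a homotopy between two vertices $x,y\in X$ is exactly a thin $1$-simplex $e\in tX$ with $d_1e=x$ and $d_0e=y$. So $x\sim y$ holds iff there is a thin edge from $x$ to $y$, and I must show this relation is reflexive, symmetric and transitive. Reflexivity is immediate: the degenerate edge $s_0x$ has $d_0s_0x=d_1s_0x=x$ and is thin because $dX\subset tX$, so $x\sim x$.

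For symmetry and transitivity I would use the same two-step procedure, reflecting the guiding intuition that thin simplices behave like invertible ones: first fill an appropriate complicial horn in dimension $2$ to produce a candidate edge, then upgrade that edge to a thin one using the second family of lifting properties $\Delta^k[2]'\hookrightarrow\Delta^k[2]''$. Concretely, for symmetry I would take a thin $e\colon x\to y$, form the horn $\Lambda^0[2]\to X$ with $d_2\mapsto e$ and $d_1\mapsto s_0x$ (these are compatible along the shared vertex, and the thinness of $d_2$ matches the stratification of $\Lambda^0[2]$), fill it along $\Lambda^0[2]\hookrightarrow\Delta^0[2]$ to a $2$-simplex $\sigma$, and set $e':=d_0\sigma\colon y\to x$. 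For transitivity, given thin $h\colon x\to y$ and $k\colon y\to z$, I would fill the horn $\Lambda^1[2]\to X$ with $d_2\mapsto h$ and $d_0\mapsto k$ to a $2$-simplex $\sigma$ and set $\ell:=d_1\sigma\colon x\to z$.

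In either case the filler $\sigma$ is automatically thin, because the nondegenerate $2$-simplex is thin in every $\Delta^k[2]$ and the lift is a stratified map. It then remains to see that the produced edge is thin. Here I would regard $\sigma$ as a stratified map out of $\Delta^k[2]'$ (with $k=0$ for symmetry, $k=1$ for transitivity): one checks that the thin $1$-simplices demanded by $\Delta^k[2]'$ are precisely the two horn faces together with the top simplex, all of which $\sigma$ already sends to thin simplices, whereas the output edge $d_0\sigma$ (resp.\ $d_1\sigma$) is \emph{not} among them. Applying the lifting property $\Delta^k[2]'\hookrightarrow\Delta^k[2]''$ extends $\sigma$ to a stratified map out of $\Delta^k[2]''$, in which \emph{all} $1$-simplices are thin; since the underlying simplicial map is unchanged, this forces the output edge to be thin. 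Thus $e'\colon y\to x$ gives $y\sim x$ and $\ell\colon x\to z$ gives $x\sim z$.

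The genuinely delicate part is this final upgrade step, not the horn-filling: a filler alone yields the candidate inverse or composite edge but says nothing about its thinness, and it is exactly the second lifting property that encodes the "$2$-out-of-$3$" principle for thin edges. The main thing I expect to have to get right is the bookkeeping of the stratifications of $\Lambda^k[2]$, $\Delta^k[2]$, $\Delta^k[2]'$ and $\Delta^k[2]''$, so that simultaneously (a) the chosen horn data is a legitimate stratified map and (b) the template $\Delta^k[2]'$ does not already force the new edge to be thin while $\Delta^k[2]''$ does; otherwise the construction would be either ill-defined or circular.
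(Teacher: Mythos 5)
Your proof is correct and takes essentially the same route as the paper's: reflexivity via the degenerate edge, transitivity by filling $\Lambda^1[2]\hookrightarrow\Delta^1[2]$, symmetry by filling $\Lambda^0[2]\hookrightarrow\Delta^0[2]$ against the edge and a degenerate edge, and in both cases upgrading the produced edge to a thin one via the lifting property along $\Delta^k[2]'\hookrightarrow\Delta^k[2]''$. Your additional bookkeeping (the identification $\Delta[0]\circledast\Delta[1]_t\cong\Delta[1]_t$ and the check that the horn data and the $\Delta^k[2]'$-structure are legitimately stratified) is accurate and only makes explicit what the paper leaves implicit.
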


\begin{proof}Let $X$ be a weak complicial set. For any vertex $x$ in $X$ we can take the constant 1-simplex at $x$ that is thin, then $\sim$ is reflexive.

Assume $x\sim y$ and $y\sim z$ with $x, y, z$ are vertices of $X$. Then we have thin 1-simplices $H$ from $x$ to $y$ and $H'$ from $y$ to $z$. These give rise to a map $\Lambda^1[2]\to X$ which lifts to $\Delta^1[2]\to X$ since $X$ is a weak complicial set. Thus we obtain a 1-simplex from $x$ to $z$. As $H$ and $H'$ are thin, this is actually a map from $\Delta^1[2]'$. Hence eventually we have a map $\Delta^1[2]''\to X$ since $X$ is a weak complicial set. This map gives a thin 1-simplex from $x$ to $z$ to show that $\sim$ is transitive.

Let $x\sim y$ with a thin 1-simplex $H$. Then we have a map $\Lambda^0[2]\to X$ which maps 2-face to $H$ and 1-face to the constant of $x$. Since $X$ is a weak complicial set, this map defines a map $\Delta^0[2]\to X$. Since both of the homotopy $H$ and the constant 1-simplex are thin, this map indeed is a map $\Delta^0[2]'\to X$ and again $X$ is a weak complicial set, this lifts to a map $\Delta^0[2]''\to X$ to give a thin 1-simplex from $y$ to $x$.
\end{proof}

This generalizes to higher simplexes due to the cartesian closedness of weak complicial sets, which is proven in \cite{V1}.

\begin{lem}\footnote{The first part of this result is mentioned in \cite{R}}For stratified maps $A\to X$ with $X$ a weak complicial set, the relation $\sim$ is an equivalence relation. Moreover, if $B\to A$ is an inclusion of stratified simplicial sets, the relation $\sim_B$ is also an equivalence relation for stratified maps $A\to X$ which coincide each other on $B$.
\end{lem}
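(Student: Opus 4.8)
The plan is to reduce both assertions to the preceding lemma, which already establishes that $\sim$ is an equivalence relation on the vertices of an arbitrary weak complicial set, by transporting the situation into an internal hom. Verity shows in \cite{V1} that ${\rm\underline{Strat}}$ is cartesian closed for the product $\circledast$ and that the internal hom $\underline{\operatorname{Hom}}(A, X)$ is a weak complicial set whenever $X$ is one. The strategy is therefore to identify stratified maps $A \to X$ with vertices of $\underline{\operatorname{Hom}}(A, X)$, and the relation $\sim$ between such maps with the vertex-level relation $\sim$ inside that weak complicial set.

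For the first assertion I would spell out the adjunction ${\rm\underline{Strat}}(A \circledast \Delta[1]_t, X) \cong {\rm\underline{Strat}}(\Delta[1]_t, \underline{\operatorname{Hom}}(A, X))$. Under it a homotopy $H : A \circledast \Delta[1]_t \to X$ from $f$ to $g$ corresponds exactly to a map $\widetilde{H} : \Delta[1]_t \to \underline{\operatorname{Hom}}(A, X)$, that is, to a thin $1$-simplex, whose endpoints are the vertices $\widehat{f}, \widehat{g}$ transposed from $f, g : A \cong A \circledast \Delta[0] \to X$. The commuting diagram defining $\sim$ translates precisely into the statement that $\widetilde{H}$ restricts to $\widehat{f}$ at $d^1$ and to $\widehat{g}$ at $d^0$. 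Thus $f \sim g$ if and only if $\widehat{f} \sim \widehat{g}$ as vertices of the weak complicial set $\underline{\operatorname{Hom}}(A, X)$, and the preceding lemma gives the claim immediately.

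For the relative statement I would realize the maps $A \to X$ restricting to a fixed $b = f|_B = g|_B$ on $B$ as the vertices of a single weak complicial set. The inclusion $B \hookrightarrow A$ induces a restriction map $p : \underline{\operatorname{Hom}}(A, X) \to \underline{\operatorname{Hom}}(B, X)$, and I would set $\underline{\operatorname{Hom}}_b(A, X)$ to be the fiber of $p$ over the vertex $\widehat{b}$, that is, the pullback of $p$ along $\Delta[0] \xrightarrow{\widehat{b}} \underline{\operatorname{Hom}}(B, X)$. Transposing again, a relative homotopy $H$ corresponds to a thin $1$-simplex $\widetilde{H}$ of $\underline{\operatorname{Hom}}(A, X)$ whose image under $p$ is the degenerate, hence thin, $1$-simplex $s_0 \widehat{b}$: the relative condition says that $H|_{B \circledast \Delta[1]_t}$ factors through the projection to $B$ followed by $b$, which transposes exactly to the constant map $\Delta[1]_t \to \underline{\operatorname{Hom}}(B, X)$ at $\widehat{b}$. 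Hence $\widetilde{H}$ factors through $\underline{\operatorname{Hom}}_b(A, X)$, and $f \sim_B g$ if and only if $\widehat{f} \sim \widehat{g}$ as vertices of $\underline{\operatorname{Hom}}_b(A, X)$; the preceding lemma then finishes the argument, provided $\underline{\operatorname{Hom}}_b(A, X)$ is itself a weak complicial set.

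The main obstacle is precisely this last proviso. I expect to deduce it from the monoidal structure of Verity's model category in \cite{V1}: the inclusion $B \hookrightarrow A$ is a cofibration, $X$ is fibrant, and the pushout-product (SM7) property of the complicial model structure makes $p$ a complicial fibration; pulling $p$ back along $\widehat{b} : \Delta[0] \to \underline{\operatorname{Hom}}(B, X)$ preserves fibrations, so its total space $\underline{\operatorname{Hom}}_b(A, X)$ is fibrant over the point, while cofibrancy is automatic since every object of ${\rm\underline{Strat}}$ is cofibrant, and a fibrant--cofibrant object is a weak complicial set. Verifying that $p$ really is a fibration --- checking the relevant lifting property against the generating complicial cofibrations, or citing the enriched model structure of \cite{V1} in the exact form needed --- is the step that will require the most care; the rest is a formal transposition through the adjunction.
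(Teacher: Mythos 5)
Your proposal is correct and takes essentially the same route as the paper: the paper likewise transposes through the internal hom and reduces to the vertex-level lemma, citing Theorem 75 of \cite{V1} for precisely the fact you flag as the main obstacle, namely that $\operatorname{hom}(A, X)\to\operatorname{hom}(B, X)$ is a complicial fibration between weak complicial sets. Your explicit pullback to the fiber over $\widehat{b}$ just spells out what the paper's terse ``the lemma above proves this one'' leaves implicit.
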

\begin{proof}By theorem 75 in \cite{V1}, the closure map $\operatorname{hom}(A, X)\to\operatorname{hom}(B, X)$ is a complicial fibration between weak complicial sets. Since vertexes in $\operatorname{hom}(A, X)$ correspond to maps $A\to X$, the lemma above proves this one.  
\end{proof}

Then we can define homotopy monoids as follows.

\begin{definition}Let $X$ be a weak complicial set, $x\in X$ a vertex and $n\geq 1$. Then we define the $n$-th homotopy monoid $\tau_n(X, x)$ to be the set of equivalence classes under $\sim_{\partial\Delta[n]}$ of $n$-simplexes $\alpha$ in $X$ such that
   \[
   \xymatrix{
    \Delta[n]\ar[r]^{\alpha}&X\\
 \partial\Delta[n]\ar@{^{(}->}[u] \ar[r]  &\Delta[0]\ar[u]_{x}
}
\]
commutes.
\end{definition}

We are going to construct a monoid structure on this set. Consider two $n$-simplices $\alpha$ and $\beta$ in $X$ such that $\alpha|_{\partial\Delta[n]}=\beta|_{\partial\Delta[n]}$ is the constant at $x$. We can construct a stratified map $\Lambda^n[n+1]\to X$ such that $n-1$-face maps to $\alpha$, $n+1$-face maps to $\beta$ and other faces map to the constant $x$. Since $X$ is a weak complicial set, this lifts to a map $\theta:\Delta^n[n+1]\to X$. In particular we obtain an $n$-simplex $d_n(\theta)$ with $d_n(\theta)|_{\partial\Delta[n]}$ is the constant at $x$.

\begin{rem}
{\rm Note that the non-degenerate $n+1$-simplex $\operatorname{Id}_{[n+1]}$ in $\Delta^n[n+1]$ is thin. Thus the $n+1$-simplex ``between $\alpha$ and $\beta$" is thin. For example, when $n=1$, we have the following picture:
   \[
   \xymatrix{
   & x&\\
 x\ar_{\beta}[rr]\ar^{d_n(\theta)}[ru]&  &x,\ar_{\alpha}[lu]
}
\]
where the $n+1$-simplex surrounded by $n$-simplexes $\alpha$, $\beta$ and $d_n(\theta)$ (and constants) is thin.}
\end{rem}


\begin{lem}With the notation above, the class $[d_n(\theta)]$ is independent of the choices of representatives of $[\alpha]$ and $[\beta]$ and that of $\theta$.
\end{lem}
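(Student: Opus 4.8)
The plan is to produce, from the data of the two choices, a single relative homotopy witnessing $d_n(\theta)\sim_{\partial\Delta[n]}d_n(\theta')$, arranged so that the three sources of ambiguity (the representative of $[\alpha]$, the representative of $[\beta]$, and the filler $\theta$) are all handled at once rather than one at a time. So suppose $\alpha\sim_{\partial\Delta[n]}\alpha'$ via $H_\alpha\colon\Delta[n]\circledast\Delta[1]_t\to X$ and $\beta\sim_{\partial\Delta[n]}\beta'$ via $H_\beta$, let $\theta$ (resp. $\theta'$) be a filler of the complicial horn built from $\alpha,\beta$ (resp. $\alpha',\beta'$), and recall that all of this data agrees with the constant map at $x$ on the relevant boundary pieces. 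The aim is to fill the prism $\Delta^n[n+1]\circledast\Delta[1]_t$ with $\theta,\theta'$ prescribed at the two ends and with $H_\alpha,H_\beta$ prescribed along the horn, and then read off the homotopy on the $n$-th face.

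First I would assemble a map out of the ``walls'' of the prism. On $\Delta^n[n+1]\circledast\partial\Delta[1]_t$ I place $\theta$ over the vertex $0$ and $\theta'$ over the vertex $1$. On $\Lambda^n[n+1]\circledast\Delta[1]_t$ I place the homotopy of horns $G$ whose $(n-1)$-face is $H_\alpha$, whose $(n+1)$-face is $H_\beta$, and whose remaining faces are constant at $x$. These two assignments agree on the overlap $\Lambda^n[n+1]\circledast\partial\Delta[1]_t$, since $G|_{\times 0}$ and $G|_{\times 1}$ are exactly the horns from which $\theta$ and $\theta'$ were filled; hence they glue to a single stratified map
\[
\Phi_0\colon\bigl(\Lambda^n[n+1]\circledast\Delta[1]_t\bigr)\cup\bigl(\Delta^n[n+1]\circledast\partial\Delta[1]_t\bigr)\longrightarrow X .
\]

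Next I would extend $\Phi_0$ over the whole prism. The inclusion of its domain into $\Delta^n[n+1]\circledast\Delta[1]_t$ is the Leibniz ($\circledast$-pushout-product) of the generating complicial anodyne extension $\Lambda^n[n+1]\hookrightarrow\Delta^n[n+1]$ with the cofibration $\partial\Delta[1]_t\hookrightarrow\Delta[1]_t$. By Verity's analysis of the stratified cartesian product in \cite{V1}---the same machinery underlying the cartesian closedness and the complicial fibration $\operatorname{hom}(A,X)\to\operatorname{hom}(B,X)$ used above---this Leibniz product is again a complicial anodyne extension, against which the weak complicial set $X$ has the right lifting property. Thus $\Phi_0$ extends to $\Phi\colon\Delta^n[n+1]\circledast\Delta[1]_t\to X$, and restricting $\Phi$ along $\delta_n\circledast\Delta[1]_t\colon\Delta[n]\circledast\Delta[1]_t\hookrightarrow\Delta^n[n+1]\circledast\Delta[1]_t$ yields a homotopy whose ends are $d_n(\theta)$ and $d_n(\theta')$. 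It is relative to $\partial\Delta[n]$ because every wall face other than the $(n-1)$st and $(n+1)$st was constant at $x$ while $H_\alpha,H_\beta$ were themselves homotopies relative to $\partial\Delta[n]$, so $d_n(\theta)\sim_{\partial\Delta[n]}d_n(\theta')$. Taking $H_\alpha,H_\beta$ constant makes $G$ constant and recovers the independence from the choice of filler, so this one construction settles all three ambiguities.

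I expect the main obstacle to be the identification of the extension problem as a complicial anodyne one, i.e. the verification that the Leibniz product of $\Lambda^n[n+1]\hookrightarrow\Delta^n[n+1]$ with $\partial\Delta[1]_t\hookrightarrow\Delta[1]_t$ lies in the saturated class. Concretely this is a bookkeeping problem about the triangulation of $\Delta[n+1]\times\Delta[1]$ into $(n+2)$-simplices: one fills these one at a time and checks, at each stage, that the inclusion being solved is of one of the two generating types. This in turn rests on the thinness of the top cell $\operatorname{Id}_{[n+1]}$ of $\Delta^n[n+1]$ (so that the fillers $\theta,\theta'$ are thin, as already noted in the Remark) and of the edge of $\Delta[1]_t$. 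Keeping track of which prism simplices are thin---and hence which fillings are genuine complicial rather than merely inner horn fillings---is the delicate point; once it is granted, the gluing and restriction steps are formal.
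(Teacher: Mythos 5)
Your proposal is correct and follows essentially the same route as the paper: glue $\theta$ and $\theta'$ onto the two ends $\Delta^n[n+1]\circledast\partial\Delta[1]$ and the homotopies $H_\alpha, H_\beta$ (with constants) onto $\Lambda^n[n+1]\circledast\Delta[1]$, extend over the whole prism using the left anodynicity of the Leibniz product of $\Lambda^n[n+1]\hookrightarrow\Delta^n[n+1]$ with the interval inclusion (the paper cites Lemma 72 of \cite{V1}; your appeal to the corner-product/cartesian-closedness machinery behind Theorem 75 is the same device), and restrict along the $n$-th face, with constant homotopies disposing of the dependence on the filler. The only difference is bookkeeping of the marking: you carry $\Delta[1]_t$ through the whole construction, whereas the paper runs the anodyne-extension step with the minimally stratified $\Delta[1]$ and then upgrades the marking by comparing $\Delta^n[n+1]\circledast\Delta[1]$ with $\Delta^n[n+1]\circledast\Delta[1]_t$ --- your variant sidesteps that comparison, at the cost of needing the Leibniz product against the marked cofibration $\partial\Delta[1]\hookrightarrow\Delta[1]_t$, which indeed follows by adjunction from Theorem 75 of \cite{V1}, so your concluding worry about a by-hand triangulation check is unnecessary.
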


\begin{proof}Suppose $\alpha\sim_{\partial\Delta[n]}\alpha'$ with a homotopy $H$ and $\beta\sim_{\partial\Delta[n]}\beta'$ with a homotopy $H'$. Then, as we just saw, there are maps $\theta, \theta':\Delta^n[n+1]\to X$. 
Then we construct a map 
\[\Delta^n[n+1]\circledast\partial\Delta[1]\to X\]
which is $\theta$ (resp. $\theta'$) when restricted to $\Delta^n[n+1]\circledast 0$ (resp. $\Delta^n[n+1]\circledast 1$), where $\partial\Delta[1]$ denotes the stratified simplicial set $(\partial\Delta[1], d\partial\Delta[1])$.
  
Also we construct a map 
\[\Lambda^n[n+1]\circledast\Delta[1]\to X\]
using $\alpha$, $\alpha'$, $H$, $\beta$, $\beta'$ and $H'$,
where $\Delta[1]$ denotes the stratified simplicial set $(\Delta[1], d\Delta[1])$.
More precisely, when restricted to $\Lambda^n[n+1]\circledast 0$ (resp. $\Lambda^n[n+1]\circledast 1$) this map is the one that $n-1$-face maps to $\alpha$ (resp. $\alpha'$), $n+1$-face maps to $\beta$ (resp. $\beta'$) and other faces map to the constant $x$.

These maps give rise to a map 
\[(\Delta^n[n+1]\circledast\partial\Delta[1])\cup(\Lambda^n[n+1]\circledast\Delta[1])\to X.\]

By lemma 72 in \cite{V1}, 
\[(\Delta^n[n+1]\circledast\partial\Delta[1])\cup(\Lambda^n[n+1]\circledast\Delta[1])\to \Delta^n[n+1]\circledast\Delta[1]\]
is a left anodyne extension.
Furthermore, looking at 1-simplices of $\Delta^n[n+1]\circledast\Delta[1]$ and those of $\Delta^n[n+1]\circledast\Delta[1]_t$,  we see that 
\[\Delta^n[n+1]\circledast\Delta[1]=\Delta^n[n+1]\circledast\Delta[1]_t.\]
More precisely, the underlying simplicial sets of them are the same, which is $\Delta[n+1]\times\Delta[1]$. By definition $(\alpha, \beta)\in\Delta[n+1]\times\Delta[1]$ is thin in $\Delta^n[n+1]\circledast\Delta[1]_t$ (resp. in $\Delta^n[n+1]\circledast\Delta[1]$) if and only if $\alpha$ is thin in $\Delta^n[n+1]$ and $\beta$ is thin in $\Delta[1]_t$ (resp. in $\Delta[1]$). Again by definition $t\Delta[1]_t\setminus t\Delta[1]=\{\operatorname{Id}_{[1]}\}$ and there is no thin 1-simplex in $\Delta^n[n+1]$ since $n\geq1$.

Therefore, since $X$ is a weak complicial set, we obtain a map 
\[\Delta^n[n+1]\circledast\Delta[1]_t\to X\]
 to define a homotopy from $d_n(\theta)$ to $d_n(\theta')$.
\end{proof}

Thus we can define a multiplication on $\tau_n(X, x)$ by $[\alpha][\beta]=[d_n(\theta)]$.

\begin{thm}This multiplication gives rise to a monoid structure on $\tau_n(X, x)$.
\end{thm}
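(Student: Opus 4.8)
The plan is to check the two monoid axioms---a two-sided identity and associativity---for $[\alpha][\beta]=[d_n(\theta)]$, using the preceding lemma so that it is enough, in each case, to produce one convenient lift and read off its $n$-th face. Throughout, $c$ denotes the constant $n$-simplex at $x$, and $s_i, d_i$ the degeneracy and face operators.

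\emph{Identity.} I claim $[c]$ is a two-sided unit. To evaluate $[\alpha][c]$ I would take the degenerate simplex $s_{n-1}\alpha\in X_{n+1}$ as a lift: the simplicial identities give $d_{n-1}(s_{n-1}\alpha)=d_n(s_{n-1}\alpha)=\alpha$, $d_{n+1}(s_{n-1}\alpha)=s_{n-1}(d_n\alpha)$ and $d_i(s_{n-1}\alpha)=s_{n-2}(d_i\alpha)$ for $i<n-1$, so that, since $\alpha$ has constant boundary, $s_{n-1}\alpha$ restricts on $\Lambda^n[n+1]$ to exactly the horn defining $[\alpha][c]$. It is degenerate, hence thin, and is a stratified map $\Delta^n[n+1]\to X$ because every non-degenerate thin simplex of $\Delta^n[n+1]$ becomes degenerate after precomposition with the collapsing surjection. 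Its $n$-th face is $\alpha$, so well-definedness gives $[\alpha][c]=[\alpha]$; symmetrically $s_n\alpha$ yields $[c][\alpha]=[\alpha]$.

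\emph{Associativity.} Given $\alpha,\beta,\gamma$ with constant boundary, I would first choose complicial lifts $\eta,\zeta,\xi\in X_{n+1}$ witnessing $[\alpha][\beta]$, $[\beta][\gamma]$ and $([\alpha][\beta])[\gamma]$, and set $q=d_n\zeta$, $r=d_n\xi$; these three $(n+1)$-simplices are thin. The heart of the argument is to assemble them into one complicial horn $\Lambda^n[n+2]\to X$ with faces $d_{n-1}\mapsto\eta$, $d_{n+1}\mapsto\xi$, $d_{n+2}\mapsto\zeta$ and $d_i\mapsto c$ for $i\le n-2$, omitting $d_n$. A direct check of $d_id_j=d_{j-1}d_i$ shows these faces agree on overlaps, and since the thin faces of the horn are exactly the $d_j$ with $j\notin\{n-1,n,n+1\}$---which are sent to constants or to $\zeta$, all thin---this is a genuine stratified map. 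Filling against $\Lambda^n[n+2]\hookrightarrow\Delta^n[n+2]$ gives $\Omega\colon\Delta^n[n+2]\to X$, and the simplicial identities compute the omitted face to have $d_{n-1}(d_n\Omega)=\alpha$, $d_{n+1}(d_n\Omega)=q$, $d_n(d_n\Omega)=r$ and constant faces below; thus $d_n\Omega$ is a lift of the very horn defining $[\alpha][q]=[\alpha]([\beta][\gamma])$ whose $n$-th face is $r=([\alpha][\beta])[\gamma]$.

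The step I expect to be the main obstacle is that $d_n\Omega$ need not be thin a priori, since the $n$-th face of $\Delta^n[n+2]$ carries no marking, so $d_n\Omega$ is not yet a legitimate product witness. To repair this I would invoke the second defining lifting property: every face $d_j\Omega$ with $j\neq n$ is thin (it is $\eta$, $\xi$, $\zeta$ or a constant), so $\Omega$ already defines a map out of $\Delta^n[n+2]'$, and lifting against $\Delta^n[n+2]'\hookrightarrow\Delta^n[n+2]''$ forces $d_n\Omega$ to be thin as well. Once $d_n\Omega$ is thin it is a bona fide lift, and well-definedness yields $[\alpha]([\beta][\gamma])=[r]=([\alpha][\beta])[\gamma]$. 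The delicate points are thus the choice of horn index making the four witnesses land in the correct positions, and this final promotion of the extracted face to a thin simplex via the thinness-extension lift---precisely the interplay of the two lifting properties already seen in the equivalence-relation lemma.
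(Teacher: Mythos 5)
Your proof is correct, and its core is the same as the paper's: you assemble the three product witnesses into the identical horn $\Lambda^n[n+2]\to X$ (your $\eta,\xi,\zeta$ are the paper's $\theta,\psi,\phi$, placed in the same positions $n-1$, $n+1$, $n+2$), fill it, and read associativity off the $n$-th face of the filler via the simplicial identities $d_nd_{n+1}=d_nd_n$ and $d_{n+1}d_n=d_nd_{n+2}$. You go beyond the paper in two places, both to your credit. First, the paper disposes of the unit axiom in one sentence; your verification via $s_{n-1}\alpha$ and $s_n\alpha$, including the check that these degenerate simplices define stratified maps $\Delta^n[n+1]\to X$ because every thin simplex of $\Delta^n[n+1]$ is carried to a degenerate (hence thin) simplex of $X$, is an actual proof of that step, and your face computations are right. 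Second, and more substantively: you correctly observe that $d_n\Omega$ is not a priori thin, since $\delta_n$ is unmarked in $\Delta^n[n+2]$, while the definition of the product and the well-definedness lemma are stated for stratified lifts $\Delta^n[n+1]\to X$, i.e.\ thin fillers (the lemma's proof genuinely needs this, as it feeds $\theta$ and $\theta'$ into a stratified map out of $\Delta^n[n+1]\circledast\partial\Delta[1]$). The paper's fourth equality $[d_nd_n(u)]=[\alpha][d_n(\phi)]$ silently treats $d_nu$ as such a witness without checking its thinness; your promotion of $d_n\Omega$ to a thin simplex by lifting against $\Delta^n[n+2]'\hookrightarrow\Delta^n[n+2]''$ (legitimate, since all faces $d_j\Omega$ with $j\neq n$ are $\eta$, $\xi$, $\zeta$ or constants, all thin) closes exactly this gap, using the same interplay of the two lifting properties as the equivalence-relation lemma. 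So your argument is not merely a reproduction of the paper's proof but a repaired version of it.
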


\begin{proof}First we show that this multiplication is associative. Let $\alpha$, $\beta$ and $\gamma$ represent elements in $\tau_n(X, x)$. As above, we obtain an $n+1$-simplex $\theta$ from $\alpha$ and $\beta$, an $n+1$-simplex $\psi$ from $d_n(\theta)$ and $\gamma$, and an $n+1$-simplex $\phi$ from $\beta$ and $\gamma$.

Referring to the remark above, we see that these data give rise to a map $\Lambda^n[n+2]\to X$ such that $n-1$-face maps to $\theta$, $n+1$-face to $\psi$, $n+2$-face to $\phi$ and other faces to $x$. Since $X$ is a weak complicial set, this lifts to a map $u:\Delta^n[n+2]\to X$. This shows that our multiplication is associative as follows:
\begin{eqnarray*}
   ([\alpha][\beta])[\gamma]&=& [d_n(\theta)][\gamma] \\
   &=& [d_n(\psi)]\\
   &=&[d_nd_n(u)]\\
   &=&[\alpha][d_n(\phi)]\\
   &=&[\alpha]([\beta][\gamma]),
\end{eqnarray*}
where we use the simplicial identity at the third $``="$ and use the definition of our multiplication of $[\alpha]$ and $[d_n(\phi)]$ at the fourth $``="$.

Note that the constant at $x$ defines the unit $e$, then we obtain a monoid structure on $\tau_n(X, x)$.
\end{proof}

\begin{rem}
{\rm This monoid structure on $\tau_n(X, x)$ is not necessarily a group structure. For example when $n=1$ consider the following picture:
\[
   \xymatrix{
   & x&\\
 x\ar^{x}[ru]&  &x\ar_{\alpha}[lu]
}
\]
If $\alpha$ is thin, this picture will be given by a map $\Lambda^2[2]\to X$, then it will lift to a map $\Delta^2[2]\to X$:
\[
   \xymatrix{
   & x&\\
 x\ar^{x}[ru]\ar_{\beta}[rr]&  &x\ar_{\alpha}[lu]
}
\]
This may show that $[\alpha][\beta]=e$. However, when $\alpha$ is not thin, we do not find its right inverse. The dual argument works for the left inverses and a similar argument works for higher $n$.

However again, as \cite[Example 16]{V1} shows that we can view Kan complexes as weak complicial sets. More precisely, for a Kan complex $K$, we obtain the stratified simplicial set $(K, \bigcup_{n\geq1}K_n)$, which is a weak complicial set by definition. We let $\operatorname{th_0}(K)$ denote the weak complicial set.  Note that, by definition, all $n$-simplices with $n\geq 1$ in $\operatorname{th_0}(K)$ are thin.}
\end{rem}

\begin{cor}For a Kan complex $K$, its vertex $v$ and $n\geq 1$, its homotopy group $\pi_n(K, v)$ and $\tau_n(\operatorname{th_0}(K), v)$ are the same as group.
\end{cor}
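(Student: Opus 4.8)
The plan is to show that, once the stratification of $\operatorname{th_0}(K)$ is taken into account, the set, the multiplication and the unit defining $\tau_n(\operatorname{th_0}(K), v)$ are literally the classical data defining the simplicial homotopy group $\pi_n(K, v)$, and then to invoke the classical fact that $\pi_n(K, v)$ is a group. The single structural observation that drives everything is that in $\operatorname{th_0}(K) = (K, \bigcup_{m\geq 1} K_m)$ every simplex of positive dimension is thin, while no vertex is thin; consequently, since every thin simplex of a source of the form $\Delta[n]$, $\Delta[n]\circledast\Delta[1]_t$, $\Lambda^n[n+1]$ or $\Delta^n[n+1]$ has positive dimension, its image under any simplicial map is automatically thin. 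Thus stratified maps into $\operatorname{th_0}(K)$ out of any such object are exactly simplicial maps into $K$, and the thinness constraints appearing in the definitions of the previous section impose no condition whatsoever.

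First I would identify the underlying sets. The generating simplices in both cases are the $n$-simplices $\alpha$ of $K$ whose restriction to $\partial\Delta[n]$ is constant at $v$, so the two sets of representatives coincide. For the equivalence relations, by the observation above a stratified homotopy $\Delta[n]\circledast\Delta[1]_t \to \operatorname{th_0}(K)$ is precisely a simplicial homotopy $\Delta[n]\times\Delta[1] \to K$, and the relative condition defining $\sim_{\partial\Delta[n]}$ translates verbatim into constancy at $v$ on $\partial\Delta[n]\times\Delta[1]$. Hence $\sim_{\partial\Delta[n]}$ on $\operatorname{th_0}(K)$ is exactly homotopy relative to $\partial\Delta[n]$, the equivalence relation used to form $\pi_n(K, v)$; that this really is an equivalence relation is guaranteed on the one hand by the Lemma showing that $\sim_{B}$ is an equivalence relation for maps into a weak complicial set, and on the other by the classical theory. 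So $\tau_n(\operatorname{th_0}(K), v)$ and $\pi_n(K, v)$ have the same underlying set.

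Next I would match the operations. The product on $\tau_n$ fills the horn $\Lambda^n[n+1]\to\operatorname{th_0}(K)$ whose $(n-1)$-face is $\alpha$, whose $(n+1)$-face is $\beta$ and whose remaining faces are constant at $v$, and sets $[\alpha][\beta] = [d_n(\theta)]$. Applying the observation once more, the complicial lift $\theta : \Delta^n[n+1] \to \operatorname{th_0}(K)$ has as underlying datum an $(n+1)$-simplex of $K$ filling the simplicial horn $\Lambda^n[n+1]_{simp}$ with exactly the same prescribed faces, a filler which exists by the Kan condition as it must, since $\operatorname{th_0}(K)$ is a weak complicial set. But this is precisely the recipe defining the group operation of $\pi_n(K, v)$: the product of $[\alpha]$ and $[\beta]$ is represented by the $n$-th face of an $(n+1)$-simplex with $d_{n-1}=\alpha$, $d_{n+1}=\beta$ and all other faces degenerate at $v$. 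Thus the two binary operations agree, and since the unit of $\tau_n$ is the class of the constant $n$-simplex at $v$, which is also the unit of $\pi_n$, the two monoid structures coincide.

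Finally, since $\pi_n(K, v)$ is a group by the classical theory of simplicial homotopy groups of Kan complexes, and $\tau_n(\operatorname{th_0}(K), v)$ has the same underlying set, the same multiplication and the same unit, the latter is a group as well, equal to $\pi_n(K, v)$; in particular every class admits an inverse, in agreement with the preceding Remark, since in $\operatorname{th_0}(K)$ every $n$-simplex is thin. I expect the only delicate point to be the careful verification that the complicial horn-filling used to define the multiplication reduces, on underlying simplicial sets and with matching face indices, to the classical $(n+1)$-simplex defining the group law of $\pi_n$; once the principle that thinness is vacuous in $\operatorname{th_0}(K)$ is set up cleanly this is routine bookkeeping, but the index conventions must be checked with care.
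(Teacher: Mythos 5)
Your proposal is correct and matches the paper's (implicit) argument: the paper leaves this corollary proofless, justifying it by the preceding remark that every simplex of positive dimension in $\operatorname{th_0}(K)$ is thin, which is exactly your key observation that thinness constraints are vacuous, so the set of representatives, the relation $\sim_{\partial\Delta[n]}$, the horn-filling product via $\Lambda^n[n+1]$ and the unit all reduce literally to the classical data defining $\pi_n(K, v)$. Your write-up simply makes explicit the bookkeeping the paper takes for granted, including the correct matching of face indices with the classical definition of the product in $\pi_n$.
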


By the same observation we have the following as well.

\begin{cor}Let $m\geq 1$ be a natural number, $X$ be a weak complicial set whose all $k$-simplices are thin with $k\geq m$, and $x\in X$ a vertex. Then $\tau_n(X, x)$ is a group for $n\geq m$. 
\end{cor}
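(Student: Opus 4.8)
The plan is to invoke the preceding Theorem, which already equips $\tau_n(X,x)$ with a monoid structure, and to produce two-sided inverses in the range $n\geq m$. The decisive feature of the hypothesis is that every simplex of $X$ of dimension at least $m$ is thin; since $n\geq m$, each representing $n$-simplex and each $(n+1)$-simplex manipulated in the multiplication is automatically thin. As the Remark above makes explicit for $n=1$, the sole obstruction to inverting a class $[\alpha]$ is the possible non-thinness of $\alpha$, and under our hypothesis that obstruction is absent.

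For the right inverse of $[\alpha]$ I would imitate the $n=1$ case of that Remark. First construct a stratified map $\Lambda^{n+1}[n+1]\to X$ whose $(n-1)$-face is $\alpha$ and all of whose remaining present faces are the constant at $x$; then fill it to a map $\theta:\Delta^{n+1}[n+1]\to X$ using the weak complicial lifting property against $\Lambda^{n+1}[n+1]\hookrightarrow\Delta^{n+1}[n+1]$. Setting $\beta:=d_{n+1}(\theta)$, I would observe that the very same $\theta$ is also a legitimate filler for the horn $\Lambda^n[n+1]$ computing $[\alpha][\beta]$: its faces are exactly $d_{n-1}(\theta)=\alpha$, $d_{n+1}(\theta)=\beta$, and $d_i(\theta)$ the constant at $x$ for $i\leq n-2$, while $d_n(\theta)$ is the constant at $x$. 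Hence $[\alpha][\beta]=[d_n(\theta)]=e$. The left inverse is produced by the mirror construction, filling the complicial horn $\Lambda^{n-1}[n+1]$ with $(n+1)$-face $\alpha$ and all other present faces constant, which yields $\gamma:=d_{n-1}$ with $[\gamma][\alpha]=e$.

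The step demanding the most care, and the one genuinely using $n\geq m$, is checking that these prescribed horn maps are stratified and that each filler qualifies as a multiplication witness, i.e.\ that every thin simplex of the source lands in a thin simplex of $X$. I would dispatch this by bookkeeping: a non-degenerate thin simplex of $\Delta^{n+1}[n+1]$ (hence of $\Lambda^{n+1}[n+1]$) is one whose image contains $\{n,n+1\}$, and any such proper face is contained in some present face $\delta^i$ with $i\leq n-1$; if $i\leq n-2$ it lands in a constant, hence degenerate, hence thin face, and if $i=n-1$ it lies inside $\alpha=\delta^{n-1}$, so it is either a proper face of $\alpha$, again constant, or $\alpha$ itself, which is thin because $\dim\alpha=n\geq m$. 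The top cell maps to the $(n+1)$-simplex $\theta$, thin since $n+1\geq m$. The identical analysis applies, \emph{mutatis mutandis}, to the $\Delta^n[n+1]$-stratification needed to view $\theta$ as a multiplication filler (there the relevant thin faces contain $\{n-1,n,n+1\}$) and to the mirror horn $\Lambda^{n-1}[n+1]$ for the left inverse. Once inverses exist on both sides, $\tau_n(X,x)$ is a group, which completes the proof.
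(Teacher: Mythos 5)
Your proof is correct and takes essentially the same approach as the paper: the paper fills the complicial horn $\Lambda^{n-1}[n+1]$ with the representative (thin by hypothesis, since $n\geq m$) on the $(n+1)$-face and constants elsewhere to obtain the left inverse, and delegates the right inverse to ``the dual arguments,'' which is exactly your $\Lambda^{n+1}[n+1]$ construction. Your explicit stratification bookkeeping, and the observation that the filler itself serves as the multiplication witness with $d_n$ constant, merely spells out steps the paper leaves implicit.
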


\begin{proof}Let $\beta$ be an $n$-simplex which represents an element of $\tau_n(X, x)$. By assumption it is thin. Thus it gives rises to a map $\Lambda^{n-1}[n+1]\to X$ which maps $n+1$-face to $\beta$ and others to the constants at $x$. Since $X$ is a weak complicial set, this map lifts to a map $\Delta^{n-1}[n+1]\to X$ which gives the left inverse of $[\beta]$. The dual arguments may give the right inverse of $\beta$.
\end{proof}

Example 57 of \cite{V1} shows that every quasi-category can be viewed as a weak complicial set via the functor $(\mathchar`-)^e$ from the category of quasi-categoies to that of weak complicial sets. More precisely, for a quasi-category $C$, we obtain the stratified simplicial set $(C, dC\cup\bigcup_{n\geq2}C_n)$ and make its specific 1-simplices thin to obtain a stratified simplicial set $C^e$. Theorem 56 in \cite{V1} shows that it is a weak complicial set. Note that, by construction, any $n$-simplex in $C^e$ with $n\geq 2$ is thin.

\begin{cor}Let $C$ be a quasi-category and $c\in C$ a vertex. Then the homotopy monoid $\tau_n(C^e, c)$ has the group structure defined above when $n\geq 2$.
\end{cor}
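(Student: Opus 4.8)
The plan is to recognize this statement as an immediate specialization of the preceding Corollary. First I would recall that $C^e$ is a weak complicial set: this is exactly Theorem 56 of \cite{V1}, which ensures that the homotopy monoids $\tau_n(C^e, c)$ are defined and carry the monoid structure constructed above.

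The key step is to check that $C^e$ satisfies the thinness hypothesis of the preceding Corollary with $m = 2$. By the construction of the functor $(\mathchar`-)^e$, the stratified simplicial set $C^e$ has underlying stratification $dC\cup\bigcup_{n\geq2}C_n$ together with certain $1$-simplices declared thin; in particular every $n$-simplex of $C^e$ with $n\geq 2$ is thin. This is precisely the condition that all $k$-simplices are thin for $k\geq m$ appearing in the preceding Corollary, taken with $m = 2$.

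It then remains only to apply that Corollary with $X = C^e$, $x = c$ and $m = 2$, which yields that $\tau_n(C^e, c)$ is a group for every $n\geq 2$, as claimed. I do not expect any genuine obstacle here, since the argument is a direct invocation of the earlier result; the only point deserving a moment's attention is that the $1$-simplices of $C^e$ which remain non-thin are irrelevant, because the hypothesis of the preceding Corollary constrains only simplices of dimension at least $m = 2$, and all of these are thin in $C^e$ by construction.
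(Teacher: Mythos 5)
Your proposal is correct and matches the paper's (implicit) argument exactly: the paper sets up this corollary by citing Theorem 56 and Example 57 of \cite{V1} for the fact that $C^e$ is a weak complicial set in which every $n$-simplex with $n\geq 2$ is thin, and then treats the result as an instance of the preceding corollary with $m=2$. Your explicit observation that the non-thin $1$-simplices are irrelevant to the hypothesis is a sound clarification of the same route.
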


As is shown in \cite[Chapter 10]{V2}, the $\omega$-nerve functor has a left adjoint functor $\operatorname{F}_{\omega}:{\rm\underline{Strat}}\to{\rm \omega\mathchar`-\underline{Cat}}$, where ${\rm \omega\mathchar`-\underline{Cat}}$ denotes the category of strict $\omega$-categories and strict $\omega$-functors. For a weak complicial set $X$ with a vertex $x$, it may be natural to compare our monoids $\tau_*(X, x)$ and (higher) endomorphism monoids on $x$ in $\operatorname{F}_{\omega}(X)$.


Note that so far we do not know whether higher homotopy monoids are commutative in general or not, although as a classical result we know that higher homotopy monoids for Kan complexes, which are homotopy groups, are commutative. It is shown in \cite{OR}, \cite{ORV} and \cite{R} that {\it $n$-trivial saturated weak complicial sets} is a model of $(\infty, n)$-categories for $n\in\mathbb{N}$. Hence it would be worth studying homotopy monoids of saturated ones rather than non-saturated weak complicial sets.

Finally, for a weak complicial set $X$, we define $\tau_0(X)$ to be the quotient set of $X_0$ divided by the equivalence relation $\sim$. Then, by definition, for a Kan complex $K$, $\tau_0(\operatorname{th_0}(K))=\pi_0(K)$, where $\pi_0(K)$ denotes the set of connected components in $K$. So we may call $\tau_0(X)$ the set of invertibly connected components in $X$.

\section{Acknowledgments}
This work is a part of a project suggested by Lars Hesselholt when I was a student supervised by him.  I appreciate him guiding myself to this project.

\end{document}